\documentclass{amsart}

\usepackage{amsmath}
\usepackage{amssymb}
\usepackage{amsthm}
\usepackage[all,cmtip]{xy}

\theoremstyle{plain}
\newtheorem{thm}{Theorem}[section]
\newtheorem{prop}[thm]{Proposition}

\newtheorem{cor}[thm]{Corollary}

\theoremstyle{definition}
\newtheorem{defn}[thm]{Definition}
\newtheorem{eg}[thm]{Example}

\theoremstyle{remark}
\newtheorem{rem}[thm]{Remark}

\DeclareMathOperator{\Ext}{Ext}

\DeclareMathOperator{\Sing}{Sing}
\DeclareMathOperator{\Spec}{Spec}

\DeclareMathOperator{\Ker}{Ker}
\DeclareMathOperator{\Image}{Im}
\DeclareMathOperator{\Gal}{Gal}

\DeclareMathOperator{\Gr}{Gr}

\begin{document}

\title
[$\mathbb{Q}$-smoothings of threefolds]
{On deformations of $\mathbb{Q}$-Fano threefolds II}

\author{Taro Sano}
\address{Max Planck Institute for Mathematics, Vivatsgasse 7, 53111 Bonn, Germany}
\email{tarosano222@gmail.com}
\subjclass[2010]{Primary 14B07, 14B15; Secondary 14J30}
\keywords{local cohomology, $\mathbb{Q}$-Fano 3-folds, $\mathbb{Q}$-smoothings, $\mathbb{Q}$-Calabi--Yau $3$-folds}

\maketitle

\begin{abstract}
We investigate some coboundary map associated to a $3$-fold terminal singularity which is important in the study of 
deformations of singular $3$-folds. 
We prove that this map vanishes only for quotient singularities and 
a $A_{1,2}/4$-singularity, that is, a terminal singularity analytically isomorphic to 
a $\mathbb{Z}_4$-quotient of the singularity
$ (x^2+y^2 +z^3+u^2=0)$. 

As an application, we prove that a $\mathbb{Q}$-Fano $3$-fold with terminal singularities can be deformed to one 
with only quotient singularities and $A_{1,2}/4$-singularities. 
We also treat the $\mathbb{Q}$-smoothability problem on $\mathbb{Q}$-Calabi--Yau $3$-folds. 
\end{abstract}

\tableofcontents

\section{Introduction}
We consider algebraic varieties over the complex number field $\mathbb{C}$. 

This paper is a continuation of \cite{Sano}. 
We study the $\mathbb{Q}$-smoothability of a $\mathbb{Q}$-Fano $3$-fold $X$ via certain coboundary maps of local cohomology groups 
associated to the singularities on $X$. 

\subsection{$\mathbb{Q}$-smoothing of $\mathbb{Q}$-Fano $3$-folds}
In this paper, {\it a $\mathbb{Q}$-Fano $3$-fold} means a projective $3$-fold with only terminal singularities whose 
anticanonical divisor is ample. 
A $\mathbb{Q}$-Fano $3$-fold is an important object in the classification theory of algebraic $3$-folds. 
It is one of the end products of the Minimal Model Program. 
Toward the classification of $\mathbb{Q}$-Fano $3$-folds, it is fundamental to study their deformations. 

Locally, a $3$-fold terminal singularity has {\it a $\mathbb{Q}$-smoothing}, that is, 
it can be deformed to a variety with only quotient singularities. 
In general, local deformations of singularities may not lift to a global deformation of a projective $3$-fold  
as shown for Calabi--Yau $3$-folds (cf.\ \cite[Example 5.8]{Namtop}). 
Nevertheless, Alt{\i}nok--Brown--Reid (\cite[4.8.3]{ABR}) conjectured that a $\mathbb{Q}$-Fano $3$-fold has a $\mathbb{Q}$-smoothing. 
(See Example \ref{eg:A12hypersurface} for an example of a $\mathbb{Q}$-smoothing.)
This conjecture aims to reduce the classification of $\mathbb{Q}$-Fano $3$-folds to those with only quotient singularities. 
 For example, there are several papers (cf.\ \cite{TandJ}, \cite{TakagiDV}) 
 on the classification of certain $\mathbb{Q}$-Fano $3$-folds with only quotient singularities. 

 Previously, deformations of $\mathbb{Q}$-Fano $3$-folds are treated in several papers 
 (cf. \cite{NamFano}, \cite{mina}, \cite{takagi2}, \cite{Sano}). 
In \cite[Theorem 1.5]{Sano}, the author proved that a $\mathbb{Q}$-Fano $3$-fold with only ``ordinary'' terminal singularities has a $\mathbb{Q}$-smoothing.  
 (See Definition \ref{ordinarydefn} for the ordinariness of the singularity.) 
 In this article, we treat the remaining case, that is, a $\mathbb{Q}$-Fano $3$-fold with non-ordinary terminal singularities. 
We can deform the non-ordinary terminal singularities except one special singularity as follows. 

\begin{thm}\label{qsmqfanothmintro}
	A $\mathbb{Q}$-Fano $3$-fold can be deformed to one with only quotient singularities and 
	  $A_{1,2}/4$-singularities.  
	\end{thm}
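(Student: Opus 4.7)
The plan is to adapt the strategy of \cite[Theorem 1.5]{Sano}: produce a one-parameter deformation of $X$ whose general fiber has strictly fewer ``bad'' singularities than $X$, where a terminal singularity is called \emph{bad} if it is neither a quotient singularity nor an $A_{1,2}/4$-singularity. Writing $n(X)$ for the number of bad singularities on $X$, the theorem then follows by induction on $n(X)$, the base case $n(X)=0$ being trivial.

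Fix a bad point $p\in X$. The \emph{local step} is to produce a first-order deformation $v_p\in T^1_{X,p}$ of an analytic neighborhood of $p$ that moves the analytic type of $p$ strictly towards the quotient or $A_{1,2}/4$ locus (so that any one-parameter family realizing $v_p$ has a general fiber with strictly smaller $n$-count near $p$). This is where the main coboundary map theorem (announced in the abstract) enters: its content is precisely that at any bad singularity the relevant coboundary map is non-zero, and unwinding this non-vanishing together with a careful look at the semi\-universal deformation of $p$ should produce the desired class $v_p$.

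The \emph{global step} is to lift $v_p$ to a first-order deformation of $X$. Via the local-to-global spectral sequence
$$
E_2^{p,q}=H^p(X,\mathcal{T}^q_X)\Longrightarrow T^{p+q}_X,
$$
this lifting is controlled by $H^2(X,T_X)$; the vanishing $H^2(X,T_X)=0$ for a $\mathbb{Q}$-Fano $3$-fold follows, following an Akizuki--Nakano vanishing argument on a resolution of $X$, from the analysis already carried out in \cite{Sano}. The \emph{unobstructedness step} then extends the first-order deformation to an actual one-parameter family $\pi\colon \mathcal{X}\to\Delta$, again using $H^2(X,T_X)=0$ together with the unobstructedness of the deformation functor of each terminal $3$-fold singularity individually. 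Since anticanonical ampleness is open in families, the general fiber $X_t$ is again a $\mathbb{Q}$-Fano $3$-fold, and by construction it satisfies $n(X_t)<n(X)$, completing the induction.

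The hard part is the local step: extracting from the non-vanishing of the coboundary map a first-order class whose associated one-parameter family actually \emph{simplifies} the singularity, rather than deforming it to some other bad singularity. This will require a case-by-case comparison of the explicit coboundary class in each non-ordinary, non-quotient, non-$A_{1,2}/4$ case with Mori's list of terminal $3$-fold singularities and their standard deformations. In contrast, the global and unobstructedness steps are by now standard in this setting and should proceed essentially as in \cite{Sano}.
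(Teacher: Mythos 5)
Your global step is where the argument breaks down. You propose to lift the local first-order class $v_p$ to $X$ via the local-to-global spectral sequence, using a claimed vanishing $H^2(X,\Theta_X)=0$ for $\mathbb{Q}$-Fano $3$-folds. That vanishing is false in general: Namikawa constructed a Fano $3$-fold with $A_{1,2}$-singularities and $H^2(X,\Theta_X)\neq 0$ (\cite[Example 5]{NamFano}), and Remark \ref{A12remark} of this paper records precisely that this naive approach does not work. The whole point of the coboundary maps $\phi_U$ is to circumvent the possible non-vanishing of $H^2(X,\Theta_X)$: one works on a partial resolution $\tilde{X}\rightarrow X$ and uses a Hodge-theoretic vanishing of the composite map from local cohomology $H^2_{E_i}(\tilde{X},\cdot)$ to global cohomology $H^2(\tilde{X},\cdot)$ (as in the diagram (\ref{minadiagqcy3}) and in \cite{NamSt}) to produce a global class $\eta$ whose restriction to $U_i$ differs from a prescribed local class only by an element of $\Ker\phi_{U_i}$; when $\phi_{U_i}\neq 0$ this forces the restriction to lie outside $\Image(\nu_i)_*$ and hence to genuinely deform $p_i$. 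Your local step also misplaces the role of the non-vanishing: every $3$-fold terminal singularity already admits a local $\mathbb{Q}$-smoothing, so producing a good local class $v_p$ is not the difficulty; $\phi_U\neq 0$ is the hypothesis that makes the \emph{global} lifting work, not a device for choosing $v_p$.

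For comparison, the paper's own proof of this theorem is two lines and contains no new global induction: by \cite[Theorem 3.2]{Sano} (quoted as Theorem \ref{thmphi0}) one can already deform $X$ to a $\mathbb{Q}$-Fano $3$-fold on which $\phi_{U_p}=0$ at every singular point, and Theorem \ref{coboundarytheoremintro} identifies such points as exactly the quotient and $A_{1,2}/4$ points. All the substantive new work is the local computation of $\phi_U$ for non-ordinary terminal singularities in Section \ref{coboundarysection}. If you want to rescue your write-up, replace the $H^2(X,\Theta_X)=0$ lifting argument by a citation of (or a proof along the lines of) Theorem \ref{thmphi0}, and drop the case-by-case analysis of semiuniversal deformations in the local step, which is not needed once the characterization of $\phi_U=0$ is in hand.
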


Here, an {\it $A_{1,2}/4$-singularity} means 
a singularity analytically isomorphic to \[
0 \in (x^2+y^2+z^3+u^2=0)/\mathbb{Z}_4 \subset \mathbb{C}^4/\mathbb{Z}_4 (1,3,2,1),
\] 
where $x,y,z,u$ are coordinates on $\mathbb{C}^4$ and $\mathbb{C}^4/\mathbb{Z}_4 (1,3,2,1)$ is the quotient of $\mathbb{C}^4$ 
by an action of $\mathbb{Z}_4=\langle \sigma \rangle$ as follows: 
\[
\sigma \cdot (x,y,z,u) = (\sqrt{-1} x, -\sqrt{-1} y, -z, \sqrt{-1} u). 
\]
Although we do not know how to deal with $A_{1,2}/4$-singularities, we believe that Theorem \ref{qsmqfanothmintro} is useful for the classification.  

\begin{rem}
	The author studied a deformation of a $\mathbb{Q}$-Fano $3$-fold with its anticanonical element in \cite{Sano} and \cite{Sano-elephant}. 
	In \cite[Theorem 1.3]{Sano-elephant}, it is proved that, if a $\mathbb{Q}$-Fano $3$-fold $X$ has a member $D \in |{-}K_X|$ with only isolated singularities, 
	then $X$ has a $\mathbb{Q}$-smoothing. In the proof, it is necessary to use \cite[Theorem 1.9]{Sano} and Theorem \ref{qsmqfanothmintro} in this paper. 
	
	The existence of an elephant with mild singularities is discussed in \cite[Section 4]{Sano-elephant} by showing several examples of 
	$\mathbb{Q}$-Fano $3$-folds.   
	\end{rem}

\subsection{Methods of the proof}
We use a method which is used in \cite[Theorem 3.5]{Sano}. 
Let $(U,p)$ be a germ of a $3$-fold terminal singularity.  
The key tool of our method is the coboundary map $\phi_{U}$ associated to some local cohomology group on a birational modification $\tilde{U} \rightarrow U$.  
(See (\ref{phiUdescription}) for the definition of $\phi_{U}$.)  
If this map is nonzero, it is useful for finding a smoothing or a $\mathbb{Q}$-smoothing of a projective $3$-fold. 
(cf.\  \cite{NamSt}, \cite{mina}, \cite{Sano})  
The following purely local statement is the main result of Section \ref{coboundarysection}. 

\begin{thm}\label{coboundarytheoremintro}
	Let $(U,p)$ be a germ of a $3$-fold terminal singularity which is not a quotient singularity. 
	
	Then $\phi_U =0$ if and only if $(U,p)$ is an $A_{1,2}/4$-singularity. 
\end{thm}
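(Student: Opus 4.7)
The plan is to prove both directions by working through the Reid--Mori classification of $3$-fold terminal singularities and evaluating $\phi_U$ for each family. A non-quotient terminal singularity $(U,p)$ is either a non-smooth compound Du Val (cDV) singularity, or a cyclic quotient of one (the types $cA/n, cAx/2, cAx/4, cD/2, cD/3, cE/2$). Since the author's earlier Theorem~1.5 in \cite{Sano} already ensures $\phi_U \neq 0$ for \emph{ordinary} singularities, what remains is to test a short, explicit list of non-ordinary normal forms and check the sole exceptional candidate $A_{1,2}/4$.

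First I would unpack the definition of $\phi_U$ in (\ref{phiUdescription}) on a specific birational model $\tilde U \to U$ (presumably a divisorial extraction of the terminal valuation, or a $\mathbb{Q}$-factorial terminalization). In this setup $\phi_U$ is a coboundary in a local cohomology sequence, and its nonvanishing detects an infinitesimal deformation of $\tilde U$ that does not descend to $U$. For the Gorenstein (cDV) case I would verify $\phi_U \neq 0$ directly: a cDV singularity $(f=0)\subset (\mathbb{C}^4,0)$ has a $1$-parameter smoothing obtained by varying the hyperplane section, and the Kodaira--Spencer class of that smoothing produces an explicit nonzero element via the Koszul/normal-bundle description on $\tilde U$. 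For non-Gorenstein cases I would pass to the index-one cover $\pi\colon V\to U$, on which $(V,\pi^{-1}(p))$ is cDV with a cyclic $G$-action, and identify $\phi_U$ with the $G$-invariant part of $\phi_V$ via equivariant local cohomology. The problem then becomes a weight calculation: list the monomials occurring in the defining equation and in the Jacobian ideal, compute their $G$-weights, and check whether a $G$-invariant representative of a nonzero class survives. For the types $cA/n, cAx/2$, generic $cAx/4, cD/2, cD/3, cE/2$ (excluding quotient and $A_{1,2}/4$ cases), one expects to exhibit by hand a weight-matched monomial that is not a Jacobian relation, giving $\phi_U \neq 0$.

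The hard step will be the $A_{1,2}/4$-singularity itself, where one must prove $\phi_U = 0$. With the weights $(1,3,2,1)$ on $(x,y,z,u)$ and equation $f = x^2+y^2+z^3+u^2$, the partial derivatives $2x, 2y, 3z^2, 2u$ sit in weights $1,3,2,1$; any potential class in the target of $\phi_V$ that is $\mathbb{Z}_4$-invariant has weight $0 \bmod 4$, and the delicate claim is that every such class is already a Jacobian relation of $f$, hence killed. This will require a careful Čech computation on a weighted blow-up of $V$ adapted to the Reid weights, recording which weight components of $H^2_p$ vanish and verifying that the invariant component of the image of $\phi_V$ lies entirely in the Jacobian image. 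Once this vanishing is established, the ``only if'' direction follows by elimination from the case analysis, completing the theorem.
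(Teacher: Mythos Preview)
Your overall plan---reduce to the non-ordinary case via the earlier result on ordinary terminal singularities, then handle the $cAx/4$ family with the $A_{1,2}/4$ case treated separately---matches the paper. But the two technical steps that actually make the argument work are missing from your proposal, and the heuristics you offer in their place do not go through.

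\textbf{For $\phi_U \neq 0$.} You propose to ``exhibit by hand a weight-matched monomial that is not a Jacobian relation, giving $\phi_U \neq 0$.'' But a monomial not in the Jacobian ideal only gives a nonzero class in $T^1_{(V,q)}$, the \emph{domain} of $\tau_V$; it says nothing about whether $\tau_V$ kills it. The paper does not produce explicit nonzero images. Instead it argues by contradiction using the Namikawa--Steenbrink inequality
\[
\dim \Ker \tau_V \le \dim \Image \tau_V
\]
together with the $\mathcal{O}_{V,q}$-module structure of $\tau_V$. If $\phi_U=0$ then the $\mathbb{Z}_4$-invariant generators $\varepsilon_z$ (and in one case $\varepsilon_{u^2}$) lie in $\Ker \tau_V$, which forces $\Image \tau_V$ to be a quotient of $\mathbb{C}[z,u]/(z,u)$ or $\mathbb{C}[z,u]/(z,u^2)$, hence of dimension at most $1$ or $2$. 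The inequality then bounds $\dim T^1_{(V,q)} \le 2$ or $4$, contradicting a direct lower bound ($\ge 3$ or $\ge 6$) obtained from the normal form. Your proposal has no substitute for this inequality, and a bare weight-table will not produce one.

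\textbf{For $\phi_U = 0$ at the $A_{1,2}/4$-point.} Your sentence ``every such class is already a Jacobian relation of $f$, hence killed'' conflates domain and target: Jacobian relations live in $T^1_{(V,q)}$, not in $H^2_F$. In fact the unique $\mathbb{Z}_4$-invariant generator $\varepsilon_z$ is \emph{not} a Jacobian relation (the Jacobian ideal is $(x,y,z^2,u)$), so $T^1_{(U,p)} \simeq \mathbb{C}\cdot \varepsilon_z \neq 0$; the content is that $\tau_V(\varepsilon_z)=0$. The paper proves this not by a \v{C}ech computation on a weighted blow-up but by Hodge theory of the Milnor fiber: Steenbrink's formulas give $b^{1,1}=1$ and $l^{1,1}=0$, hence $\dim H^2_F(\tilde{V}, \Omega^2_{\tilde{V}}(\log F)(-F)) = 1$. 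Since $T^1_{(V,q)} \simeq \mathbb{C}[z]/(z^2)$ is $2$-dimensional and $\tau_V$ is surjective, if $\tau_V(\varepsilon_z)\neq 0$ then $\Ker \tau_V = 0$ (the only proper $\mathbb{C}[z]$-submodule not containing $\varepsilon_z$), forcing $\dim H^2_F = 2$, a contradiction. This Milnor-number/Hodge input is the missing idea; a direct \v{C}ech calculation on an arbitrary equivariant log resolution would be substantially harder and is not what the paper does.
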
 
 
The map $\phi_U$ is known to be nonzero when $(U,p)$ is Gorenstein (\cite[Theorem 1.1]{NamSt}) or 
$(U,p)$ is an ordinary singularity (\cite{mina}, \cite{Sano}). We calculate the coboundary map for 
 a non-ordinary singularity. 
 
Let us mention about the proof of Theorem \ref{coboundarytheoremintro}. 
Since a terminal singularity $(U,p)$ of index $r$ is a $\mathbb{Z}_r$-quotient of a hypersurface singularity $(V,q)$, 
the set $T^1_{(U,p)}$ of first order deformations of $(U,p)$ is the $\mathbb{Z}_r$-invariant part of $T^1_{(V,q)}$. 
The set $T^1_{(V,q)}$ can be written as $\mathcal{O}_{V,q}/ J_{V,q}$ for the Jacobian ideal of $(V,q)$. 
We calculate the map $\phi_U$ by using this structure and the inequality (\ref{ineqkerim}) proved in \cite{NamSt}.

By Theorem \ref{coboundarytheoremintro} (ii), the map $\phi_U$ vanishes for a neighborhood $U$ of an $A_{1,2}/4$-singularity. 
It seems that we need a new method to treat a $\mathbb{Q}$-Fano $3$-fold with $A_{1,2}/4$-singularities. (See Remark \ref{A12remark})

\subsection{$\mathbb{Q}$-smoothing of $\mathbb{Q}$-Calabi--Yau $3$-folds}

As another corollary of Theorem \ref{coboundarytheoremintro}, we obtain a similar result for $\mathbb{Q}$-Calabi--Yau $3$-folds. 
Here, a {\it $\mathbb{Q}$-Calabi--Yau $3$-fold} is a normal projective $3$-fold with only terminal singularities whose canonical divisor is a torsion class. 
Let $r$ be the Gorenstein index of $X$, that is, the minimal positive integer such that $\mathcal{O}_X(rK_X) \simeq \mathcal{O}_X$. 
The isomorphism $\mathcal{O}_X(rK_X) \simeq \mathcal{O}_X$ determines the global index one cover 
$\pi \colon Y:= \Spec \oplus_{j=0}^{r-1} \mathcal{O}_X(j K_X) \rightarrow X$. 

As a consequence of Theorem \ref{coboundarytheoremintro} and the proof of \cite[Main Theorem 1]{mina}, we obtain the following. 

\begin{thm}\label{qcy3thm}
	Let $X$ be a $\mathbb{Q}$-Calabi--Yau $3$-fold. Assume that the global index one cover $Y \rightarrow X$ is $\mathbb{Q}$-factorial. 
	
	Then a $\mathbb{Q}$-Calabi--Yau $3$-fold $X$ can be deformed to one with only quotient singularities and 
	 $A_{1,2}/4$-singularities. 
	\end{thm}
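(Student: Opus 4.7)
The plan is to follow the proof strategy of \cite[Main Theorem 1]{mina}, which treated the ordinary case, and substitute Theorem \ref{coboundarytheoremintro} wherever the non-vanishing of the local coboundary map $\phi_U$ was invoked. Minagawa's scheme has two ingredients: (a) produce, from a non-zero $\phi_U$ at a given terminal point $p$, a global first-order deformation of $X$ whose local class at $p$ is non-trivial; and (b) extend this first-order deformation to a one-parameter analytic deformation that partially smooths the singularity at $p$. The $\mathbb{Q}$-factoriality of the index-one cover $\pi\colon Y \to X$ enters (b) through the Bogomolov--Tian--Todorov type unobstructedness for $Y$.

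First I would consider the local-to-global spectral sequence for deformations, which yields the exact sequence
\[
H^1(X,\mathcal{T}_X) \to T^1_X \to H^0(X,\mathcal{T}^1_X) \xrightarrow{\mathrm{ob}} H^2(X,\mathcal{T}_X),
\]
and try to realise a prescribed sum of local first-order classes at the non-quotient, non-$A_{1,2}/4$ terminal points of $X$ as the image of a global class $\xi \in T^1_X$. Theorem \ref{coboundarytheoremintro} guarantees that such local classes with $\phi_U \neq 0$ exist at every such point; the obstruction in $H^2(X,\mathcal{T}_X)$ is then killed exactly as in \cite{mina}, the key input being $\mathbb{Q}$-factoriality of $Y$. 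Once $\xi$ is in hand, I would invoke unobstructedness of $\Def(X)$, which is extracted from that of $\Def(Y)$ via the $\mathbb{Z}_r$-invariant subfunctor, using the unobstructedness results of Kawamata--Namikawa for $\mathbb{Q}$-factorial Calabi--Yau threefolds with terminal singularities.

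Given the resulting analytic deformation $\mathcal{X} \to \Delta$, the non-vanishing of $\phi_U$ at each targeted singularity forces the general fiber to have strictly simpler singularities there, measured for example by the Milnor number of the index-one cover. Upper semi-continuity then prevents the appearance of new non-quotient, non-$A_{1,2}/4$ singularities on the general fiber. I would close the argument by induction on the total Milnor number of the non-quotient, non-$A_{1,2}/4$ singularities of $X$, the base case being that only quotient and $A_{1,2}/4$ singularities remain.

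The main obstacle is the careful bookkeeping required to transport Minagawa's argument into the non-ordinary setting: one must verify that the local first-order deformations produced in the proof of Theorem \ref{coboundarytheoremintro} are compatible with the $\mathbb{Z}_r$-action coming from $\pi$, and that the $A_{1,2}/4$ locus is genuinely preserved under the iteration, so that these singularities are neither created nor destroyed uncontrollably. Since Theorem \ref{coboundarytheoremintro} singles out $A_{1,2}/4$ as the sole obstruction to $\phi_U \neq 0$ among non-quotient terminal singularities, this bookkeeping should go through in parallel with the ordinary case.
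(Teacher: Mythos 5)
Your overall plan---follow Minagawa's scheme and substitute Theorem \ref{coboundarytheoremintro} for the non-vanishing of the local coboundary map---is the right one, and your use of unobstructedness and of an iteration/semicontinuity argument at the end matches the paper. But the central step of your proposal has a genuine gap. You propose to use the local-to-global sequence
\[
H^1(X,\mathcal{T}_X) \rightarrow T^1_X \rightarrow H^0(X,\underline{\Ext}^1(\Omega^1_X,\mathcal{O}_X)) \rightarrow H^2(X,\mathcal{T}_X)
\]
and to ``realise a prescribed sum of local first-order classes as the image of a global class,'' with the obstruction in $H^2(X,\mathcal{T}_X)$ ``killed'' by $\mathbb{Q}$-factoriality of $Y$. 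There is no mechanism for that: $H^2(X,\Theta_X)$ need not vanish for a ($\mathbb{Q}$-)Calabi--Yau or Fano $3$-fold, $\mathbb{Q}$-factoriality of the index one cover does not make it vanish, and the paper explicitly warns in Remark \ref{A12remark} (citing Namikawa's example \cite[Example 5]{NamFano}) that this naive lifting approach fails in general. The whole point of the Namikawa--Steenbrink/Minagawa machinery is to avoid lifting the local classes themselves.

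What the paper actually does is replace the target $H^0(X,\underline{\Ext}^1)$ by the direct sum of \emph{local cohomology groups on a partial resolution}: one maps $H^1(X',\Omega^2_{X'}(-K_{X'}))$ to $\oplus_i H^2_{E_i}(\tilde{X},\bar{\mathcal{F}}^{(0)})$, whose ``obstruction'' term is $H^2(\tilde{X},\bar{\mathcal{F}}^{(0)})$ via the exact sequence of local cohomology (diagram (\ref{minadiagqcy3})). The composite $B_i\circ\varphi_i^{-1}\circ\bar{\phi}_i$ vanishes by \cite[Proposition 1.2]{NamSt}, and \emph{this} is where $\mathbb{Q}$-factoriality of $Y$ enters---not through unobstructedness, as you suggest. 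Consequently, for the local classes $\eta_i$ with $\bar{\phi}_i(\eta_i)\neq 0$ supplied by Corollary \ref{barphicorollary}, one finds a global class $\eta$ with $\psi_i(\eta)=\varphi_i^{-1}(\bar{\phi}_i(\eta_i))$; one does \emph{not} arrange $p_{U_i}(\eta)=\eta_i$, only $p_{U_i}(\eta)-\eta_i\in\Ker\bar{\phi}_i$. That weaker conclusion suffices because of the blow-down relation (\ref{blowdownrel}), $\Image\nu_*\subset\Ker\bar{\tau}_V$, which shows $p_{U_i}(\eta)\notin\Image(\nu_i)_*$ and hence that the deformation genuinely improves the singularity at $p_i$. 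Unobstructedness of $\Def(X)$ is then quoted directly from \cite[Theorem A]{Namtop}, with no need to descend from $\Def(Y)$. Without replacing your local-to-global sequence by this local-cohomology diagram, the proof does not go through.
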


\begin{rem}
	Namikawa  studied another invariant for terminal singularities and 
	$\mathbb{Q}$-smoothability of $\mathbb{Q}$-Calabi--Yau $3$-folds in his unpublished note. 
	The invariant is $\mu(X,x)$ defined in \cite[Section 2]{NamSt}. 
	It seems that this invariant also vanishes for  a $A_{1,2}/4$-singularity $(X,x)$. 
	So we do not know the $\mathbb{Q}$-smoothability of a $\mathbb{Q}$-Calabi-Yau $3$-fold with 
	$A_{1,2}/4$-singularities. 
	\end{rem}

\section{Calculation of coboundary maps}\label{coboundarysection}
First, we introduce the coboundary map of local cohomology which is used in \cite[3.2]{Sano} 
to find a $\mathbb{Q}$-smoothing of a $\mathbb{Q}$-Fano $3$-fold.  
(See also \cite[Section 1]{NamSt}, \cite[Section 4]{mina}.)

Let $(U,p)$ be a germ of a $3$-fold terminal singularity. 
Let $\pi_U \colon (V,q) \rightarrow (U,p)$ be the index one cover. By the classification (\cite{mori}, \cite{YPG}), we see that $(V,q)$ is a hypersurface singularity and 
$\pi_U$ is \'{e}tale outside $p$. Moreover, we have 
\[
(V,q) \simeq ((f=0),0) \subset (\mathbb{C}^4, 0)
\]
 for some $f \in \mathbb{C}[x,y,z,u]$, where 
$x,y,z,u$ are coordinate functions on $\mathbb{C}^4$ and $f$ satisfies $\sigma \cdot f = \zeta_U f$ for the generator $\sigma \in G:= \Gal (V/U) \simeq \mathbb{Z}_r$ 
and  $\zeta_U =\pm 1$.

We define the ordinariness of a terminal singularity as follows.
\begin{defn}\label{ordinarydefn}
	Let $(U,p)$ be a germ of a $3$-fold terminal singularity. 
	The germ $(U,p)$ is called {\it ordinary} (resp. {\it non-ordinary}) if $\zeta_U=1$ (resp. $\zeta_U= -1$). 
	\end{defn}

\begin{rem}
Let $(U,p)$ be a germ of a non-ordinary terminal singularity. By the classification (\cite{mori}, \cite{YPG}), we have 
\begin{equation}\label{nonordinarydescription}
(U,p) \simeq ((x^2 +y^2 +g(z,u)=0),0)/ \mathbb{Z}_4 \subset (\mathbb{C}^4/ \mathbb{Z}_4,0),  
\end{equation}
where $g(z,u) \in \mathfrak{m}_{\mathbb{C}^4,0}^2$ is some $\mathbb{Z}_4$-semi-invariant polynomial in $z, u$ and $\sigma \in \mathbb{Z}_4$ acts on $\mathbb{C}^4$ by $\sigma \cdot (x,y,z,u) \mapsto (\sqrt{-1}x, -\sqrt{-1} y, -z, \sqrt{-1}u)$. 
\end{rem}

Let $(U,p)$ be a germ of a $3$-fold terminal singularity and $V$ its index one cover with the $\mathbb{Z}_r$-action as above. 
Let $\nu \colon \tilde{V} \rightarrow V$ be a $\mathbb{Z}_r$-equivariant  resolution such that its exceptional divisor
 $F \subset \tilde{V}$ has SNC support and $\tilde{V} \setminus F \simeq V \setminus \{q \}$.  
Let $V':= V \setminus \{q \}$ and 
\[
\tau_V \colon H^1(V', \Omega^2_{V'}(-K_{V'})) \rightarrow H^2_{F}(\tilde{V}, \Omega^2_{\tilde{V}}(\log F)(-F-\nu^*K_V))
\] the coboundary map of the local cohomology. 
Note that the sheaf $\mathcal{O}_{V}(-K_V)$ and $\mathcal{O}_V$ are isomorphic as sheaves, 
but not isomorphic as $\mathbb{Z}_r$-equivariant sheaves.  
Let $\tilde{\pi} \colon \tilde{V} \rightarrow \tilde{U}:= \tilde{V}/\mathbb{Z}_r$ be the finite morphism induced by $\pi$ and $E \subset \tilde{U}$  
the exceptional locus of the birational morphism $\mu \colon \tilde{U} \rightarrow U$ induced by $\nu$. 
Let $U':= U \setminus \{ p \}$ and  $\mathcal{F}_U^{(0)}$ the $\mathbb{Z}_r$-invariant part of $\tilde{\pi}_* \Omega^2_{\tilde{V}}(\log F)(-F -\nu^* K_V)$. 
Then we have the coboundary map 
\begin{equation}\label{phiUdescription}
\phi_U \colon H^1(U', \Omega^2_{U'}(-K_{U'})) \rightarrow H^2_{E}(\tilde{U}, \mathcal{F}_U^{(0)}) 
\end{equation}
which is the $\mathbb{Z}_r$-invariant part of $\tau_V$. We shall study these coboundary maps $\tau_V$ and $\phi_U$ in this section. 

For an ordinary terminal singularity, we can calculate the map $\phi_U$ as follows. 

\begin{thm}\label{ordinarycoboundary}(cf.\ \cite[Lemma 3.4]{Sano}) 
	Let $(U,p)$ be a germ of a $3$-fold ordinary terminal singularity which is not a quotient singularity.  
	Then we have $\phi_U \neq 0$. 
	\end{thm}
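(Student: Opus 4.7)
The plan is to deduce the nonvanishing of $\phi_U$ from the Namikawa--Steenbrink nonvanishing theorem for the Gorenstein hypersurface singularity $(V,q)$, and then to descend this nonvanishing to the $\mathbb{Z}_r$-invariant summand by exploiting the special shape of the $\mathbb{Z}_r$-action in the ordinary case, together with the inequality (\ref{ineqkerim}) bounding $\dim\Ker\tau_V$.

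The first step is to identify the source of $\tau_V$ in concrete terms. Since $(V,q)$ is a three-dimensional Gorenstein hypersurface singularity, there is a natural isomorphism $\Omega^2_{V'}(-K_{V'})\simeq T_{V'}$ on the smooth locus $V'=V\setminus\{q\}$, and the local cohomology long exact sequence identifies $H^1(V',\Omega^2_{V'}(-K_{V'}))$ with $T^1_{V,q}\simeq \mathcal{O}_{V,q}/J_f$, where $J_f$ denotes the Jacobian ideal of $f$. The $\mathbb{Z}_r$-action on this module is the natural one induced from the linear action on $\mathbb{C}^4$, twisted by $\zeta_U$; under the ordinariness hypothesis $\zeta_U=1$ the invariant summand is exactly $T^1_{U,p}$, the space of first-order deformations of $(U,p)$.

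The second step is to invoke Namikawa--Steenbrink \cite[Theorem 1.1]{NamSt}, which gives $\tau_V\neq 0$, together with their inequality (\ref{ineqkerim}), which bounds $\dim\Ker\tau_V$ from above in terms of invariants of the resolution $\tilde V\to V$. Splitting source and target into $\mathbb{Z}_r$-isotypic components, it suffices to exhibit one $\mathbb{Z}_r$-invariant class in $T^1_{V,q}$ whose image under $\tau_V$ is nonzero. Using the explicit normal forms of ordinary terminal singularities from the classification of \cite{mori} and \cite{YPG}, I would produce a candidate invariant element $h\in\mathcal{O}_{V,q}$, check that its image in $\mathcal{O}_{V,q}/J_f$ is nonzero and invariant, and verify via (\ref{ineqkerim}) that this image is not swallowed by $\Ker\tau_V$.

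The main obstacle is precisely this last verification, since a priori the whole $\mathbb{Z}_r$-invariant part of the source could lie in $\Ker\tau_V$. The key point that rescues the argument in the ordinary case is that $\zeta_U=1$ guarantees that the equivariant perturbations $f+th$ of $f$ with $h$ invariant produce classes lying in the invariant summand of $T^1_{V,q}$, and the classification ensures that once $(U,p)$ is not a quotient singularity there are enough such $h$ to force at least one invariant class outside the subspace cut out by (\ref{ineqkerim}). This is precisely what distinguishes the ordinary case from the non-ordinary $A_{1,2}/4$-case appearing in Theorem \ref{coboundarytheoremintro}, where the twist by $\zeta_U=-1$ pushes the analogous perturbations into a nontrivial character and the argument must break down.
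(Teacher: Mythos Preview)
The paper gives no proof of Theorem~\ref{ordinarycoboundary}; it simply cites \cite[Lemma 3.4]{Sano}. So there is nothing to compare against directly, but the argument that is implicit in the paper's own methods (and is almost certainly what is in \cite{Sano}) is much shorter than what you propose, and your plan has a genuine gap at the crucial step.

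The decisive observation you are missing is this: the class $\varepsilon_1\in T^1_{(V,q)}\simeq \mathcal{O}_{V,q}/J_f$ corresponding to the constant function $1$ is a \emph{generator} of $T^1_{(V,q)}$ as an $\mathcal{O}_{V,q}$-module, and $\tau_V$ is an $\mathcal{O}_{V,q}$-module homomorphism (the paper uses exactly this in the proof of Theorem~\ref{coboundarytheorem}(i)). Hence if $\tau_V(\varepsilon_1)=0$ then $\tau_V(\varepsilon_h)=h\cdot\tau_V(\varepsilon_1)=0$ for every $h$, i.e.\ $\tau_V=0$, contradicting \cite[Theorem 1.1]{NamSt} since $(V,q)$ is singular (because $(U,p)$ is not a quotient singularity). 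So $\tau_V(\varepsilon_1)\neq 0$. In the ordinary case $\zeta_U=1$, the deformation $f+t\cdot 1$ is $\mathbb{Z}_r$-equivariant, so $\varepsilon_1\in T^1_{(U,p)}$; therefore $\phi_U(\varepsilon_1)=\tau_V(\varepsilon_1)\neq 0$. This is the whole proof, and it uses neither the inequality (\ref{ineqkerim}) nor any case-by-case analysis of the classification.

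Your plan, by contrast, proposes to find \emph{some} invariant $h$ and then check via the bound $\dim\Ker\tau_V\le \dim\Image\tau_V$ that $\varepsilon_h\notin\Ker\tau_V$. That verification is not carried out, and as stated it is not clear it would succeed: the inequality (\ref{ineqkerim}) only says $\dim\Ker\tau_V\le\tfrac{1}{2}\dim T^1_{(V,q)}$, which does not by itself prevent the entire $\mathbb{Z}_r$-invariant subspace from sitting inside $\Ker\tau_V$ when the invariant part is small. You would need a separate dimension estimate for $(T^1_{(V,q)})^{\mathbb{Z}_r}$ in each case of the classification, and you have not supplied one. The module-generator argument above sidesteps all of this: it singles out the unique class $\varepsilon_1$ that \emph{cannot} lie in $\Ker\tau_V$ for module-theoretic reasons, and observes that ordinariness is precisely the condition that makes $\varepsilon_1$ invariant. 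This also explains cleanly why the non-ordinary case is different: when $\zeta_U=-1$ the class $\varepsilon_1$ lies in the $(-1)$-eigenspace, not in $T^1_{(U,p)}$, and one is forced into the more delicate analysis of Theorem~\ref{coboundarytheorem}.
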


In the following, we prepare ingredients for calculating $\phi_U$ for a germ $(U,p)$ of a non-ordinary terminal singularities. 

We have $H^1_{F}( \tilde{V}, \Omega^2_{\tilde{V}}(\log F)(-F)) =0$ by the proof of \cite[Theorem 4]{StDB}. 
We also have $H^2(\tilde{V}, \Omega^2_{\tilde{V}}(\log F)(-F))=0$ by the Guill\'{e}n--Navarro Aznar--Puerta--Steenbrink vanishing theorem. 
Thus we have an exact sequence 
\begin{multline}\label{rational-exact} 
0 \rightarrow H^1(\tilde{V}, \Omega^2_{\tilde{V}}(\log F) (-F -\nu^*K_V)) \rightarrow H^1(V', \Omega^2_{V'}(-K_{V'})) \\ 
\stackrel{\tau_V}{\rightarrow}  
H^2_{F}(\tilde{V}, \Omega^2_{\tilde{V}}(\log F)(-F - \nu^* K_V)) \rightarrow 0
\end{multline}

The following inequality proved in \cite{NamSt} is useful for the calculation of the coboundary maps.  

\begin{prop} We have 
\begin{equation}\label{ineqkerim}
\dim \Ker \tau_V \le \dim \Image \tau_V. 
\end{equation}
\end{prop}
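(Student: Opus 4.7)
The approach is to apply local Serre duality on the proper birational morphism $\nu \colon \tilde{V} \to V$ (where $V$ is a Stein hypersurface germ) to replace the sheaf $\Omega^2_{\tilde{V}}(\log F)(-F)$ by its ``dual'' $\Omega^1_{\tilde{V}}(\log F)$, and then to interpret the resulting dimensions as Hodge-graded pieces of the mixed Hodge structure on $H^*(V', \mathbb{C})$, whereby the inequality becomes a manifestation of Hodge symmetry.

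First, since $V \subset \mathbb{C}^4$ is a hypersurface, $K_V$ is Cartier and trivial, so $\nu^* K_V = 0$ and we set $\mathcal{F} := \Omega^2_{\tilde{V}}(\log F)(-F)$. The exact sequence (\ref{rational-exact}) then reads $\Ker \tau_V = H^1(\tilde{V}, \mathcal{F})$ and $\Image \tau_V = H^2_F(\tilde{V}, \mathcal{F})$. The perfect pairing of locally free sheaves
\[
\Omega^2_{\tilde{V}}(\log F)(-F) \otimes_{\mathcal{O}_{\tilde{V}}} \Omega^1_{\tilde{V}}(\log F) \longrightarrow \omega_{\tilde{V}}
\]
combined with local Serre duality for $\tilde{V}$ proper over the Stein space $V$ yields
\[
\dim \Ker \tau_V = \dim H^2_F(\tilde{V}, \Omega^1_{\tilde{V}}(\log F)), \qquad \dim \Image \tau_V = \dim H^1(\tilde{V}, \Omega^1_{\tilde{V}}(\log F)),
\]
and likewise converts the vanishings stated before (\ref{rational-exact}) into $H^1_F(\tilde{V}, \Omega^1(\log F)) = H^2(\tilde{V}, \Omega^1(\log F)) = 0$, giving by the same local cohomology argument a parallel short exact sequence
\[
0 \to H^1(\tilde{V}, \Omega^1(\log F)) \to H^1(V', \Omega^1_{V'}) \to H^2_F(\tilde{V}, \Omega^1(\log F)) \to 0.
\]

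Next I invoke Deligne's $E_1$-degeneration of the Hodge-to-de Rham spectral sequence for the smooth open variety $V' = \tilde{V} \setminus F$, which identifies $\dim H^1(\tilde{V}, \Omega^1(\log F))$ with the Hodge-graded dimension $\dim \mathrm{gr}^1_F H^2(V', \mathbb{C})$. Since $V'$ is homotopy equivalent to the link $L$ of the singularity, a closed oriented $5$-manifold, Poincar\'{e} duality on $L$ pairs $H^2(L)$ with $H^3(L)$, and combining this with Hodge symmetry on the mixed Hodge structures involved yields the comparison
\[
\dim H^2_F(\tilde{V}, \Omega^1(\log F)) \le \dim H^1(\tilde{V}, \Omega^1(\log F)),
\]
which is precisely the required inequality.

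The main obstacle is this last step: while $\dim \Image \tau_V$ is immediately a Hodge-graded piece of $H^2(V')$ via the degenerating spectral sequence, the group $H^2_F(\tilde{V}, \Omega^1(\log F))$ controlling $\dim \Ker \tau_V$ is \emph{not} directly a Hodge-graded piece of any MHS abutment, since the hypercohomology $\mathbb{H}^*_F(\tilde{V}, \Omega^\bullet(\log F))$ in fact vanishes and the corresponding spectral sequence cannot degenerate at $E_1$. Extracting the inequality therefore requires carefully exploiting the Hodge and weight symmetries of the non-pure MHS on $H^*(V')$ or, equivalently, of the MHS on the vanishing cohomology of the Milnor fibre, which is the technical core of the argument in \cite{NamSt}.
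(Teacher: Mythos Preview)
Your reduction via local duality is correct as far as it goes: the pairing $\Omega^2_{\tilde V}(\log F)(-F)\otimes\Omega^1_{\tilde V}(\log F)\to\omega_{\tilde V}$ does convert the inequality into
\[
h^2_F\bigl(\tilde V,\Omega^1_{\tilde V}(\log F)\bigr)\ \le\ h^1\bigl(\tilde V,\Omega^1_{\tilde V}(\log F)\bigr).
\]
The problem is that this is not progress. Applying local duality once more to each side returns you exactly to the original inequality $h^1(\Omega^2(\log F)(-F))\le h^2_F(\Omega^2(\log F)(-F))$; duality alone is a symmetry and cannot by itself produce an asymmetric bound. Your attempt to break the symmetry by invoking ``Hodge symmetry on $H^*(V')$'' does not work for the reason you yourself identify: $H^2_F(\tilde V,\Omega^1(\log F))$ is not a Hodge-graded piece of any mixed Hodge structure (the hypercohomology $\mathbb{H}^*_F(\tilde V,\Omega^\bullet(\log F))$ vanishes, so that spectral sequence certainly does not degenerate at $E_1$), and the Poincar\'e duality on the link pairs $\mathrm{gr}^p_F H^2$ with $\mathrm{gr}^{3-p}_F H^3$, not with the quantity you need. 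So at the point you write ``which is the technical core of the argument in \cite{NamSt}'' you have not supplied any input beyond duality, and the proof is genuinely incomplete.

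The paper's proof (which is the argument in \cite{NamSt}) supplies exactly the two missing asymmetric inputs. First, the restriction surjection $H^2_F(\Omega^2(\log F)(-F))\twoheadrightarrow H^2_F(\Omega^2(\log F))$, which uses that $H^2_F(\Omega^2(\log F)\otimes\mathcal{O}_F)=\Gr^2_F H^5_{\{q\}}(V,\mathbb{C})=0$ for dimension reasons. Second, and this is the real point, the surjectivity of the de~Rham differential
\[
d\colon H^1\bigl(\tilde V,\Omega^1_{\tilde V}(\log F)(-F)\bigr)\twoheadrightarrow H^1\bigl(\tilde V,\Omega^2_{\tilde V}(\log F)(-F)\bigr),
\]
which is extracted from the spectral sequence $E_1^{p,q}=H^q(\tilde V,\Omega^p(\log F)(-F))\Rightarrow\mathbb{H}^{p+q}=0$; here the vanishing of the abutment forces enough differentials to be surjective. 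Sandwiching one local-duality step between these two surjections gives the chain
\[
h^2_F(\Omega^2(\log F)(-F))\ge h^2_F(\Omega^2(\log F))=h^1(\Omega^1(\log F)(-F))\ge h^1(\Omega^2(\log F)(-F)),
\]
which is the inequality. The de~Rham differential on the $(-F)$-twisted complex is precisely the ingredient your outline is missing.
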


\begin{proof}
This is proved in Remark after \cite[Theorem (1.1)]{NamSt}. 
Let us recall the proof for the convenience of the reader. 

By the exact sequence (\ref{rational-exact}), it is enough to show that 
\[
h^1(\tilde{V}, \Omega^2_{\tilde{V}}(\log F) (-F)) \le h^2_F(\tilde{V}, \Omega^2_{\tilde{V}}(\log F)(-F)). 
\]
We have a surjection 
\[
H^2_F(\tilde{V}, \Omega^2_{\tilde{V}}(\log F)(-F)) \rightarrow H^2_F(\tilde{V}, \Omega^2_{\tilde{V}}(\log F))
\]
since we have $H^2_F(\tilde{V}, \Omega^2_{\tilde{V}}(\log F)\otimes \mathcal{O}_F) = \Gr_F^2 H^5_{\{q\}}(V, \mathbb{C}) =0$. By the local duality, we have 
\[
H^2_F(\tilde{V}, \Omega^2_{\tilde{V}}(\log F))^* \simeq H^1(\tilde{V}, \Omega^1_{\tilde{V}}(\log F)(-F)). 
\]
Moreover we see that the differential homomorphism
\[
d \colon H^1(\tilde{V}, \Omega^1_{\tilde{V}}(\log F)(-F)) \rightarrow H^1(\tilde{V}, \Omega^2_{\tilde{V}}(\log F)(-F))
\]
is surjective by studying the spectral sequence 
\[
H^q(\tilde{V}, \Omega^p_{\tilde{V}}(\log F)(-F)) \Rightarrow \mathbb{H}^{p+q}(\tilde{V}, \Omega^{\bullet}_{\tilde{V}}(\log F)(-F)) =0   
\]
as in the proof of \cite[Theorem (1.1)]{NamSt}. Thus we obtain relations 
\begin{multline}
h^2_F(\tilde{V}, \Omega^2_{\tilde{V}}(\log F)(-F)) \ge h^2_F(\tilde{V}, \Omega^2_{\tilde{V}}(\log F)) = 
h^1(\tilde{V}, \Omega^1_{\tilde{V}}(\log F) (-F)) \\
 \ge h^1(\tilde{V}, \Omega^2_{\tilde{V}}(\log F) (-F)) 
\end{multline}
 and this implies (\ref{ineqkerim}). 
\end{proof}

Let $T^1_{(V,q)}$, $T^1_{(U,p)}$ be the sets of first order deformations of the germs $(V,q)$ and $(U,p)$ respectively. 
	Recall that we have an isomorphism $T^1_{(V,q)} \simeq \mathcal{O}_{V,q}/ J_{V,q}$ of $\mathcal{O}_{V,q}$-modules 
	for the Jacobian ideal $J_{V,q} \subset \mathcal{O}_{V,q}$. 
Hence we have a surjective $\mathcal{O}_{V,q}$-module homomorphism $\varepsilon \colon \mathcal{O}_{V,q} \rightarrow T^1_{(V,q)}$ 
which sends $h \in \mathcal{O}_{V,q}$ to the corresponding deformation $\varepsilon_h \in T^1_{(V,q)}$. 
Also we have a commutative diagram 
\[
\xymatrix{
T^1_{(U,p)} \ar[r]^{\simeq \ \ \ \ \ \ \ } \ar@{^{(}->}[d] & H^1(U', \Omega^2_{U'}(-K_{U'})) \ar@{^{(}->}[d] \\ 
T^1_{(V,q)} \ar[r]^{\simeq \ \ \ \ \ \ \ } & H^1(V', \Omega^2_{V'}(-K_{V'})),  
}
\]
where the horizontal isomorphisms are restrictions by open immersions and 
the upper terms inject into the lower terms as the $\mathbb{Z}_r$-invariant parts. 
Note that we have the horizontal isomorphisms since $\{p \} \hookrightarrow U$ and $\{q \} \hookrightarrow V$ 
have codimensions $3$, and the spaces $U$ and $V$ are Cohen-Macaulay. 
Thus we identify $T^1_{(V,q)}, T^1_{(U,p)}$ and $H^1(V', \Omega^2_{V'}(-K_{V'})), H^1(U', \Omega^2_{U'}(-K_{U'}))$ 
respectively via these isomorphisms.

We use the following notion of right equivalence (\cite[Definition 2.9]{GLSintro}).

\begin{defn}
	Let $\mathbb{C}\{x_1,\ldots, x_n \}$ be the convergent power series ring of $n$ variables. 
	Let $f,g \in \mathbb{C} \{x_1,\ldots, x_n \}$. 
	
	We say that $f$ is {\it right equivalent} to $g$ if there exists an automorphism $\varphi$ of 
	$\mathbb{C} \{x_1, \ldots , x_n \}$ such that $\varphi(f) = g$.  
	We write this as $f \overset{r}{\sim} g$. 
	\end{defn} 

By using these ingredients, we calculate the coboundary map for a non-ordinary singularity. 
The following theorem and Theorem \ref{ordinarycoboundary} imply Theorem \ref{coboundarytheoremintro}.  

\begin{thm}\label{coboundarytheorem}
Let $(U,p)$ be a germ of a non-ordinary $3$-fold terminal singularity which is not a quotient singularity. 
\begin{enumerate} 
\item[(i)] Assume that the index one cover $(V,q) \not\simeq ((x^2+y^2+z^3+u^2 =0),0)$.  
Then we have $\phi_U \neq 0$.  
\item[(ii)] Assume that $(V,q) \simeq  ((x^2+y^2+z^3+u^2 =0),0)$. Then $\phi_U =0$. 
\end{enumerate}
\end{thm}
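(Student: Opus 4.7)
The plan is to compute $\phi_U$ by working with the model $T^1_{(V,q)} \simeq \mathcal{O}_{V,q}/J_{V,q}$ and exploiting the $\mathbb{Z}_4$-equivariance of $\tau_V$ together with the bound \eqref{ineqkerim}. Since $\zeta_U = -1$, the subspace $T^1_{(U,p)} \subset T^1_{(V,q)}$ is the $\sigma = -1$ eigenspace, and $\phi_U = \tau_V|_{T^1_{(U,p)}}$; equivariance makes $\ker \tau_V$ a sub-representation, while \eqref{ineqkerim} gives $\dim \ker \tau_V \le \tfrac{1}{2}\dim T^1_{(V,q)}$.

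For part (i), I would use the Mori--Reid classification to put $g(z,u)$ into a right-equivalence normal form, enumerate the non-ordinary cases excluding $g=z^3+u^2$, and in each case compute the Jacobian ideal $J_{V,q}$, the quotient $\mathcal{O}_{V,q}/J_{V,q}$, and its $(-1)$-eigenspace. The target is to show $T^1_{(U,p)} \not\subset \ker \tau_V$. Whenever the strict inequality $\dim T^1_{(U,p)} > \tfrac{1}{2}\dim T^1_{(V,q)}$ holds, this is immediate from \eqref{ineqkerim}. In borderline cases (for instance families of the form $g = z^{2k+1}+u^2$ if they occur in the classification) I would apply a character-refined version of \eqref{ineqkerim}: the chain of maps in its proof---the surjection onto $H^2_F(\Omega^2_{\tilde V}(\log F))$ coming from the Hodge-theoretic vanishing, local duality on $\tilde V$, and surjectivity of the log de Rham differential---is $\mathbb{Z}_4$-equivariant, so the inequality can be split character-by-character (modulo a predictable character shift from duality), and this suffices to rule out $T^1_{(U,p)} \subset \ker\tau_V$.

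For part (ii) with $g = z^3+u^2$, one computes $J_{V,q} = (x,y,z^2,u)$, so $T^1_{(V,q)} \simeq \mathbb{C}[z]/(z^2)$ has basis $\{1,z\}$ of $\sigma$-characters $+1$ and $-1$; hence $T^1_{(U,p)} = \mathbb{C}\cdot z$. The bound gives $\dim\ker\tau_V \le 1$, and equivariance forces $\ker\tau_V \in \{0,\,\mathbb{C}\cdot 1,\,\mathbb{C}\cdot z\}$. To conclude $\phi_U = 0$ it suffices to exhibit a nonzero class of character $-1$ in $\ker\tau_V = H^1(\tilde V, \Omega^2_{\tilde V}(\log F)(-F-\nu^* K_V))$ via \eqref{rational-exact}. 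I would construct an explicit $\mathbb{Z}_4$-equivariant resolution $\tilde V \to V$ of the hypersurface $x^2+y^2+z^3+u^2 = 0$ (standard since the underlying singularity is cDV) and either compute the relevant eigenspace of $H^1$ directly, or, more conceptually, exhibit a deformation of $(U,p)$ along $z \in T^1_{(U,p)}$ that extends to a simultaneous deformation of the chosen resolution, thereby realizing $z$ as an extendable first-order deformation.

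The main obstacle is the borderline analysis in part (i), where careful bookkeeping of $\mathbb{Z}_4$-characters through each step in the proof of \eqref{ineqkerim}---in particular the character shift from local duality---is required. A secondary obstacle is the explicit resolution and equivariant cohomology computation in part (ii); once any single nonzero element of $\ker\tau_V$ of character $-1$ has been produced, the dimension bound and $\mathbb{Z}_4$-equivariance together immediately pin down $\ker\tau_V = \mathbb{C}\cdot z = T^1_{(U,p)}$ and hence $\phi_U = 0$.
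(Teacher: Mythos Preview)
Your proposal misses the single idea that drives the paper's argument in \emph{both} parts: $\tau_V$ is an $\mathcal{O}_{V,q}$-module homomorphism, so $\Ker\tau_V \subset T^1_{(V,q)} \simeq \mathcal{O}_{V,q}/J_{V,q}$ is an $\mathcal{O}_{V,q}$-submodule, not merely a $\mathbb{Z}_4$-subrepresentation. This is much stronger than equivariance and is what makes the inequality \eqref{ineqkerim} bite.

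For part (i), consider the borderline family you flag, $g=z^{2i_0+1}+u^2$ with $i_0\ge 2$. Here $T^1_{(V,q)}\simeq\mathbb{C}[z]/(z^{2i_0})$ and $\dim T^1_{(U,p)}=i_0=\tfrac12\dim T^1_{(V,q)}$, so assuming $\phi_U=0$ only gives $\dim\Ker\tau_V=i_0=\dim\Image\tau_V$, which is not yet a contradiction; your proposed ``character-refined'' version of \eqref{ineqkerim} would have to rule out this exact splitting, and tracking characters through local duality and the de~Rham differential is delicate and not visibly conclusive. The paper instead observes: if $\varepsilon_z\in\Ker\tau_V$, then $h\cdot\varepsilon_z=\varepsilon_{hz}\in\Ker\tau_V$ for all $h$, and since $\varepsilon_u=0$ one gets a surjection $\mathbb{C}[z,u]/(z,u)\twoheadrightarrow\Image\tau_V$, hence $\dim\Image\tau_V\le 1$, contradicting $\dim T^1_{(V,q)}\ge 3$. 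The second case ($a_{0,2}=0$) is handled identically using $\varepsilon_z,\varepsilon_{u^2}\in T^1_{(U,p)}$.

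For part (ii), your simultaneous-resolution idea has a genuine gap. The one-parameter family $x^2+y^2+u^2+z^3+tz=0$ is smooth for $t\ne 0$, so $\varepsilon_z$ is a smoothing direction; there is no reason to expect it to lift to a first-order deformation of $\tilde V$, and in any case $\Image\nu_*\subset\Ker\tau_V$ is only an inclusion, not an equality. The paper avoids any explicit resolution: it uses Steenbrink's Hodge invariants of the Milnor fiber to compute $\dim H^2_F(\tilde V,\Omega^2_{\tilde V}(\log F)(-F))=1$, and then the module structure again. Since $T^1_{(V,q)}\simeq\mathbb{C}[z]/(z^2)$ as a $\mathbb{C}[z]$-module, if $\varepsilon_z\notin\Ker\tau_V$ then $\Ker\tau_V$ is a submodule not containing $(z)$, hence $\Ker\tau_V=0$ and $\dim\Image\tau_V=2$, contradicting the one-dimensional target. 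Equivariance alone only tells you $\Ker\tau_V\in\{0,\mathbb{C}\!\cdot\!1,\mathbb{C}\!\cdot\!z\}$ and cannot distinguish the last two without the module constraint.
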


\begin{proof}

(i) Suppose that $\phi_U =0$. We show the claim by contradiction. 
We can write $g(z,u) = \sum a_{i,j} z^i u^j \in \mathbb{C}[z,u]$ for some $a_{i,j} \in \mathbb{C}$ for $i,j \ge 0$.  
Since the generator $\sigma \in \mathbb{Z}_4$ acts on $g$ by $\sigma \cdot g = -g$ and on $z^i u^j$ by $\sigma \cdot z^i u^j = \sqrt{-1}^{2i +j} z^i u^j$, 
we see that $a_{i,j} \neq 0$ only if 
\begin{equation}\label{mod4cond}
2i+j \equiv 2 \mod 4.
\end{equation} 

Let $J_g:= (\frac{\partial g}{\partial z}, \frac{\partial g}{\partial u}) \subset \mathbb{C}[z,u]$ be 
the Jacobian ideal of the polynomial $g$. 
Note that we have $T^1_{(V,q)} \simeq \mathbb{C}[z,u]/(g, J_g)$ since $\varepsilon_x = \varepsilon_y =0 \in T^1_{(V,q)}$. 

({\bf Case 1}) Assume that $a_{0,2} \neq 0$. We can write 
\[
g(z,u) = u^2(1+ h_1(z,u)) + h_2(z)
\] 
for some  polynomials $h_1(z,u) \in (z,u) \subset \mathbb{C}[z,u]$ and $h_2(z) \in (z) \subset \mathbb{C}[z]$.  
Thus $g(z,u) \in \mathcal{O}_{\mathbb{C}^2,0}$ is right equivalent to $u^2 + h_2(z)$. 
We see that $h_2(z) \in \mathcal{O}_{\mathbb{C},0}$ is right equivalent to $z^{2i_0+1}$ for some positive integer $i_0$ since
$(g=0)$ has an isolated singularity and by the condition (\ref{mod4cond}).  
Thus we have \[
(V,q) \simeq ((x^2 +y^2 +z^{2i_0+1} + u^2=0),0).
\] 

If $i_0 =1$, it contradicts the assumption $(V,q) \not\simeq ((x^2+y^2+z^3+u^2 =0),0)$. 
Hence we have  $i_0 \ge 2$. 
By calculating the partial derivatives of $x^2+y^2 +z^{2 i_0 +1} +u^2$, we see that $\varepsilon_1, \varepsilon_z, \varepsilon_{z^2} \in T^1_{(V,q)}$ are linearly independent and 
\[
\dim T^1_{(V,q)} \ge 3.
\] 
On the other hand, we see that $\tau_V(\varepsilon_z) =0$ since we assumed $\phi_U =0$ and $\varepsilon_z \in T^1_{(U,p)}$. 
By this and the fact that $\tau_V$ is an $\mathcal{O}_{V,q}$-module homomorphism, we obtain a surjection 
$\mathbb{C}[z,u]/(z,u) \rightarrow \Image \tau_V$ since $ \varepsilon_u =0$.  
By this surjection and $\mathbb{C}[z,u]/(z,u) \simeq \mathbb{C}$,  we obtain $\dim \Image \tau_V \le 1$. 
By this and the inequality (\ref{ineqkerim}), we obtain an inequality 
\[
\dim T^1_{(V,q)} = \dim \Image \tau_V + \dim \Ker \tau_V \le 1+1 =2   
\] 
and it is a contradiction. 

({\bf Case 2}) Assume that $a_{0,2} =0$. Then we see that $a_{i,j} \neq 0$ only if $2i +j \ge 6$ by (\ref{mod4cond}). 
Note that a monomial $z^i u^j$ with $2i+j \ge 6$ is 
some multiple of either $z^3, z^2u^2, zu^4$ or $u^6$. 
By computing partial derivatives of these monomials, we see that  $(g,J_g) \subset (z^2, z u^2, u^4)$.
Thus we see that $\varepsilon_1, \varepsilon_z, \varepsilon_{zu}, \varepsilon_u, \varepsilon_{u^2}, \varepsilon_{u^3} \in T^1_{(V,q)}$ are linearly independent and 
we obtain 
\begin{equation}\label{toomuchineq}
\dim T^1_{(V,q)} \ge 6. 
\end{equation}

On the other hand, by the assumption $\phi_U =0$, we have $\tau_V(\varepsilon_z) =0, \tau_V( \varepsilon_{u^2}) =0$ 
since $\varepsilon_z, \varepsilon_{u^2} \in T^1_{(U,p)}$.  
Thus we have a relation $(z,u^2) \subset \Ker \tau_V \circ \varepsilon \subset \mathcal{O}_{V,q}$ and 
obtain a surjection $\mathbb{C}[z,u]/(z,u^2) \rightarrow \Image \tau_V$. 
This implies an inequality 
$\dim \Image \tau_V \le \dim \mathbb{C}[z,u]/(z,u^2) =2$. 
By this inequality and the inequality (\ref{ineqkerim}), we have an inequality 
\[
\dim T^1_{(V,q)} = \dim \Ker \tau_V + \dim \Image \tau_V \le 2+2 =4. 
\]
This contradicts (\ref{toomuchineq}). 

Hence we obtain $\phi_U \neq 0$ and  finish the proof of (i). 

(ii) For non-negative integers $i,j$, we set 
\[
b^{i,j}:= \dim H^j(\tilde{V}, \Omega^i_{\tilde{V}}(\log F)(-F)), 
\]  
\[
l^{i,j}:= \dim H^j(F, \Omega^i_{\tilde{V}}(\log F) \otimes \mathcal{O}_F).  
\] 
Let $s_k(V,q)$ for $k=0,1,2,3$ be the Hodge number of the Milnor fiber of $(V,q)$ as in \cite[Section 4]{StDB}. 
By \cite[Theorem 6]{StDB}, we have $s_0=0, s_1= b^{1,1}, s_2 =b^{1,1} +l^{1,1}$ and $s_3 = l^{0,2}$. We see that $l^{0,2} =0$ by \cite[Lemma 2]{StDB}. 
  Since the sum $\sum_{k=0}^3 s_k(V,q)$ is the Milnor number of $(V,q)$, we obtain  $2b^{1,1} +l^{1,1} = 2$. 
Since $b^{1,1} \neq 0$ by \cite[Theorem 2.2]{NamSt}, we obtain 
\begin{equation}\label{b11l11} 
	b^{1,1} =1, \ \ l^{1,1}=0.
	\end{equation} 

There exists an exact sequence 
\begin{multline}
H^0(F, \Omega^1_{\tilde{V}}(\log F) \otimes \mathcal{O}_F)  \rightarrow H^1(\tilde{V}, \Omega^1_{\tilde{V}}(\log F)(-F)) \rightarrow H^1(\tilde{V}, \Omega^1_{\tilde{V}}(\log F))   \\ \rightarrow 
H^1(F, \Omega^1_{\tilde{V}}(\log F) \otimes \mathcal{O}_F). 
\end{multline}
Since $l^{1,0} =0$ by \cite[Lemma 1]{StDB}, the both outer terms are zero and the homomorphism in the middle is an isomorphism. 
By this and (\ref{b11l11}), 
we have \begin{equation}\label{h2F1dim}
\mathbb{C} \simeq H^1(\tilde{V}, \Omega^1_{\tilde{V}}(\log F)) \simeq H^2_F(\tilde{V},  \Omega^2_{\tilde{V}}(\log F)(-F))^*.
\end{equation} 

Suppose that $\tau_V(\varepsilon_z) \neq 0$. Then $\varepsilon_z \not\in \Ker \tau_V$. 
This implies that $\Ker \tau_V =0$ since $T^1_{(V,q)} \simeq \mathbb{C}[z]/(z^2)$ as $\mathbb{C}[z]$-modules. 
Thus $\mathbb{C}^2 \simeq \Image \tau_V \simeq H^2_F(\tilde{V},  \Omega^2_{\tilde{V}}(\log F)(-F))$.    
This contradicts (\ref{h2F1dim}).  

Thus we obtain $\tau_V(\varepsilon_z) =0$. 
Since $T^1_{(U,p)} \simeq \mathbb{C}$ is generated by $\varepsilon_z$, we see that $\phi_U=0$. 
Thus we finish the proof of (ii). 
\end{proof}

\vspace{5mm}

Now we prepare another coboundary map to study $\mathbb{Q}$-smoothability of a $\mathbb{Q}$-Calabi--Yau $3$-fold. 

Let $(U,p)$ be a germ of a $3$-fold terminal singularity and $V, \tilde{V}, F, \tilde{U}$ as before. 
We have the coboundary map 
\[
\bar{\tau}_V  \colon H^1(V', \Omega^2_{V'}(-K_{V'})) \rightarrow H^2_F(\tilde{V}, \Omega^2_{\tilde{V}}(-\nu^* K_V))
\]
and this fits in the commutative diagram 
\begin{equation}\label{tau_Vfactor}
\xymatrix{
H^1(V', \Omega^2_{V'}(-K_{V'})) \ar[r]^{\bar{\tau}_V} \ar[d]^{\tau_V} & H^2_F(\tilde{V}, \Omega^2_{\tilde{V}}(-\nu^* K_V)) \\
H^2_F(\tilde{V}, \Omega^2_{\tilde{V}}(\log F)(-F - \nu^* K_V)), \ar@{^{(}->}[ru]_{\tau'_V} & 
}
\end{equation}
where the injectivity of $\tau'_V$ is proved in the proof of \cite[Theorem 1.1]{NamSt}. 

Let $\bar{\mathcal{F}}^{(0)}_U:= (\tilde{\pi}_* \Omega^2_{\tilde{V}}(-\nu^* K_V))^{\mathbb{Z}_r}$ 
be the $\mathbb{Z}_r$-invariant part. 
Let 
\[
\bar{\phi}_U \colon H^1(U', \Omega^2_{U'}(-K_{U'})) \rightarrow H^2_E(\tilde{U}, \bar{\mathcal{F}}_{U}^{(0)})
\]
be the coboundary map. It is the $\mathbb{Z}_r$-invariant part of $\bar{\tau}_V$. 
As the $\mathbb{Z}_r$-invariant part of the diagram (\ref{tau_Vfactor}), we obtain the following diagram;
\begin{equation*}
\xymatrix{
H^1(U', \Omega^2_{U'}(-K_{U'})) \ar[r]^{\bar{\phi}_U} \ar[d]^{\phi_U} & H^2_E(\tilde{U}, \bar{\mathcal{F}}_{U}^{(0)}) \\
H^2_E(\tilde{U},\mathcal{F}_{U}^{(0)}). \ar@{^{(}->}[ru]_{\phi'_U} & 
}
\end{equation*}
By these arguments, we obtain the following result as a corollary of 
Theorem \ref{ordinarycoboundary} and Theorem \ref{coboundarytheorem}. 

\begin{cor}\label{barphicorollary}
	Let $(U,p)$ be a germ of a $3$-fold terminal singularity which is not a quotient singularity.

	Then $\bar{\phi}_U =0$ if and only if the germ $(U,p)$ is an $A_{1,2}/4$-singularity. 
	\end{cor}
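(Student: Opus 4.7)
The plan is to observe that the corollary is essentially a formal consequence of the commutative diagram displayed just before it, combined with the main computation from Theorem \ref{coboundarytheorem} (together with Theorem \ref{ordinarycoboundary} for the ordinary case). The diagram factorizes $\bar{\phi}_U$ through $\phi_U$ via the map $\phi'_U$, which is the $\mathbb{Z}_r$-invariant part of the injection $\tau'_V$ that factors $\bar{\tau}_V = \tau'_V \circ \tau_V$. Since taking $\mathbb{Z}_r$-invariants is exact in characteristic zero, the injectivity of $\tau'_V$ (established in the proof of \cite[Theorem 1.1]{NamSt}, as noted in the text) descends to injectivity of $\phi'_U$. From $\bar{\phi}_U = \phi'_U \circ \phi_U$ and the injectivity of $\phi'_U$, we immediately get the equivalence $\bar{\phi}_U = 0 \Longleftrightarrow \phi_U = 0$.

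Having reduced the question to the vanishing of $\phi_U$, I would then invoke the already-proved dichotomy. If $(U,p)$ is ordinary and not a quotient singularity, Theorem \ref{ordinarycoboundary} gives $\phi_U \neq 0$, so $\bar{\phi}_U \neq 0$. If $(U,p)$ is non-ordinary, Theorem \ref{coboundarytheorem}(i) yields $\phi_U \neq 0$ whenever the index one cover is not $((x^2+y^2+z^3+u^2=0),0)$, while Theorem \ref{coboundarytheorem}(ii) gives $\phi_U = 0$ precisely when the cover has this form, i.e., when $(U,p)$ is an $A_{1,2}/4$-singularity. Putting these cases together produces exactly the stated biconditional for $\bar{\phi}_U$.

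There is essentially no serious obstacle left once the factorization is in hand, since all the real work has been carried out in Theorems \ref{ordinarycoboundary} and \ref{coboundarytheorem}. The only point requiring a brief justification is the injectivity of $\phi'_U$, and I would present this as a one-line consequence of the exactness of $\mathbb{Z}_r$-invariants applied to the injection $\tau'_V$. Everything else is a direct quotation of the previously established results.
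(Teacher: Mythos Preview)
Your proposal is correct and follows exactly the approach the paper intends: the corollary is stated immediately after the commutative diagram factorizing $\bar{\phi}_U = \phi'_U \circ \phi_U$ with $\phi'_U$ injective, and the paper explicitly says it is obtained ``by these arguments'' as a corollary of Theorems \ref{ordinarycoboundary} and \ref{coboundarytheorem}. The only minor remark is that preservation of injectivity under taking $\mathbb{Z}_r$-invariants needs only left exactness of the invariants functor, not full exactness, so the appeal to characteristic zero is unnecessary (though harmless).
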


We use the blow-down morphism of deformations by a resolution $\tilde{V} \rightarrow V$ 
to find a $\mathbb{Q}$-smoothing. 
It is already used in several papers on deformations of singular $3$-folds. (cf.\ \cite{NamSt}, \cite{NamFano}, \cite{Sano})

Let 
\[
\nu_* \colon H^1(\tilde{V}, \Omega^2_{\tilde{V}}(-K_{\tilde{V}})) \rightarrow H^1(V', \Omega^2_{V'}(-K_{V'}))  
\]
be the restriction homomorphism by the open immersion $V' \hookrightarrow \tilde{V}$. 
We use this notation since there is a commutative diagram 
\[
\xymatrix{
H^1(\tilde{V}, \Omega^2_{\tilde{V}}(-K_{\tilde{V}})) \ar[r]^{\nu_*} \ar[d]^{\simeq} 
&  H^1(V', \Omega^2_{V'}(-K_{V'}))  \ar[d]^{\simeq} \\ 
T^1_{\tilde{V}} \ar[r] & T^1_V,  
}
\]
where the lower horizontal homomorphism is the blow-down homomorphism of deformations (\cite{Wahl}). 
We can prove the relation 
\begin{equation}\label{blowdownrel}
	\Image \nu_* \subset \Ker \tau_V = \Ker \bar{\tau}_V    
	\end{equation}
by the same argument as in \cite[Claim 3.7]{Sano}.

\section{Application to $\mathbb{Q}$-smoothing problems}
In \cite[Theorem 3.2]{Sano}, we proved the following. 

\begin{thm}\label{thmphi0}
Let $X$ be a $\mathbb{Q}$-Fano $3$-fold. 

Then there exists a deformation $\mathcal{X} \rightarrow \Delta^1$ of $X$ 
over a unit disc $\Delta^1$ such that the general fiber $\mathcal{X}_t$ for $t \in \Delta^1 \setminus \{0 \}$ satisfies the following; 
For each singular point $p \in \mathcal{X}_t$ and its Stein neighborhood $U_p$, the coboundary map $\phi_{U_p}$ vanishes. 
\end{thm}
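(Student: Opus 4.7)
I would argue by induction on the total Milnor number
\[
\mu(X) := \sum_{p \in \Sing X} \mu(V_p, q_p),
\]
where $(V_p, q_p) \to (U_p, p)$ denotes the index-one cover of a Stein neighborhood of $p$. If $\phi_{U_p} = 0$ already holds at every singular point of $X$, take the trivial family $X \times \Delta^1$. Otherwise, fix some $p_0 \in \Sing X$ with $\phi_{U_{p_0}} \neq 0$ and pick $\eta \in T^1_{(X,p_0)}$ satisfying $\phi_{U_{p_0}}(\eta) \neq 0$. The aim is to realize $\eta$ as the restriction at $p_0$ of a global first-order deformation, integrate to a $1$-parameter family $\mathcal{X} \to \Delta^1$, and show that $\mu(\mathcal{X}_t) < \mu(X)$ for general $t$. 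Since $\mu \ge 0$ this process terminates; the smoothness of $\Def(X)$ then allows the resulting chain of $1$-parameter deformations to be concatenated into a single analytic family over $\Delta^1$.

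The global lifting step uses the local-to-global Ext spectral sequence. For a $\mathbb{Q}$-Fano $3$-fold with terminal (hence rational and Cohen--Macaulay) singularities, Akizuki--Nakano--Kawamata--Viehweg type vanishing gives $H^2(X, T_X) = 0$, so the localization sequence
\[
\Ext^1(\Omega^1_X, \mathcal{O}_X) \longrightarrow \bigoplus_{q \in \Sing X} T^1_{(X,q)} \longrightarrow H^2(X, T_X) = 0
\]
is surjective and $\eta$ lifts to some global $\tilde\eta$. Unobstructedness of deformations of $\mathbb{Q}$-Fano $3$-folds (due to Namikawa and Minagawa) then integrates $\tilde\eta$ to an analytic family $\mathcal{X}\to\Delta^1$.

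To prove the strict drop $\mu(\mathcal{X}_t) < \mu(X)$, pass to the index-one cover $V := V_{p_0}$ and let $\tilde\eta_0 \in T^1_{(V,q_{p_0})}$ be the $\mathbb{Z}_r$-invariant preimage of $\eta$. The blow-down relation (\ref{blowdownrel}) gives $\Image \nu_* \subset \Ker \tau_V$, and the assumption $\phi_{U_{p_0}}(\eta) \ne 0$ says precisely that $\tau_V(\tilde\eta_0) \ne 0$; thus $\tilde\eta_0 \notin \Image \nu_*$, and the induced $1$-parameter family of hypersurface germs over $\Delta^1$ admits no $\mathbb{Z}_r$-equivariant simultaneous resolution, even at first order. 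Combining upper semi-continuity of Milnor numbers with the fact that a $\mu$-constant family of isolated hypersurface singularities in the relevant setting admits a simultaneous resolution forces $\sum_{q' \to q_{p_0}} \mu(V_{q'}, q') < \mu(V, q_{p_0})$. Semi-continuity at the remaining singular points then yields $\mu(\mathcal{X}_t) < \mu(X)$.

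The main obstacle is this last implication, namely converting the algebraic non-vanishing $\phi_{U_{p_0}}(\eta) \ne 0$ into an honest decrease of Milnor number on the general fiber. Beyond the input (\ref{blowdownrel}), it requires controlling the $\mu$-constant case for $\mathbb{Z}_r$-equivariant isolated hypersurface $3$-fold singularities, where the explicit classification of terminal singularities of Mori and Reid becomes essential. A minor subsidiary point is the concatenation of the finitely many $1$-parameter families produced by the induction into a single family, which is handled by the smoothness of $\Def(X)$.
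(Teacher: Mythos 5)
There is a genuine gap, and it sits at the heart of your argument: the global lifting step. You claim that Akizuki--Nakano--Kawamata--Viehweg type vanishing gives $H^2(X,\Theta_X)=0$ for a $\mathbb{Q}$-Fano $3$-fold, so that the localization map $\Ext^1(\Omega^1_X,\mathcal{O}_X)\rightarrow \bigoplus_q T^1_{(X,q)}$ is surjective and any prescribed local class $\eta$ lifts. This is false in general: the vanishing $H^2(X,\Theta_X)\simeq H^1(X,\Omega^1_X\otimes\omega_X)^*=0$ holds for smooth Fanos but can fail in the presence of terminal singularities, and the paper itself points this out in Remark \ref{A12remark}, citing Namikawa's example of a Fano $3$-fold with $A_{1,2}$-singularities and $H^2(X,\Theta_X)\neq 0$. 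The entire reason the coboundary maps $\phi_U$, $\tau_V$ are introduced is to circumvent exactly this failure. The actual argument (this theorem is quoted from \cite[Theorem 3.2]{Sano}; its mechanism is visible in the proof of Theorem \ref{qcy3thm} in this paper) does not lift $\eta$ in $T^1$ at all. Instead it works in local cohomology on a partial resolution $\mu\colon\tilde X\rightarrow X$: the sequence $H^1(X',\Omega^2_{X'}(-K_{X'}))\rightarrow \bigoplus_i H^2_{E_i}(\tilde X,\mathcal{F}^{(0)})\rightarrow H^2(\tilde X,\mathcal{F}^{(0)})$ together with a vanishing on $\tilde X$ (coming from the Fano condition via Kawamata--Viehweg/Guill\'en--Navarro Aznar--Puerta--Steenbrink, or from $\mathbb{Q}$-factoriality in the Calabi--Yau case) shows that the nonzero class $\phi_{U_i}(\eta_i)$ lifts backwards to a \emph{global} class $\eta$ on $X'$. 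One then only knows $p_{U_i}(\eta)-\eta_i\in\Ker\phi_{U_i}$, which by (\ref{blowdownrel}) is enough to conclude $p_{U_i}(\eta)\notin\Image(\nu_i)_*$; one never needs, and cannot in general arrange, that the global deformation restricts to the chosen local one.

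A second, smaller gap is your termination argument. You convert $\tau_V(\tilde\eta_0)\neq 0$ into a strict drop of the Milnor number by invoking upper semicontinuity plus the assertion that a $\mu$-constant family of isolated hypersurface $3$-fold singularities admits a ($\mathbb{Z}_r$-equivariant) simultaneous resolution. That assertion is not a known theorem in this generality and is not what is used in \cite{Sano} or \cite{NamSt}; the termination there is controlled by a different, explicitly semicontinuous invariant tied to the classification of terminal singularities (and to the relation $\Image\nu_*\subset\Ker\tau_V$), not by the Milnor number together with a L\^e--Ramanujam-type statement. As written, both the lifting step and the descent step need to be replaced, so the proposal does not constitute a proof.
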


As an application of this result and Theorem \ref{coboundarytheorem}, we obtain a proof of Theorem \ref{qsmqfanothmintro} as follows. 

\begin{proof}[Proof of Theorem \ref{qsmqfanothmintro}]
	By Theorem \ref{thmphi0}, we can deform a $\mathbb{Q}$-Fano $3$-fold $X$ to one with only singularities $p_1, \ldots, p_l$ 
	such that $\phi_{U_i} =0$, where $U_i$ is a Stein neighborhood of $p_i$ for $i=1, \ldots, l$. By Theorem \ref{coboundarytheoremintro}, 
	such a terminal singularity is either a quotient singularity or an $A_{1,2}/4$-singularity. Thus we finish the proof.   
	\end{proof}

\vspace{2mm}

\begin{eg}\label{eg:A12hypersurface}
	There exists an example of a $\mathbb{Q}$-Fano $3$-fold with an $A_{1,2}/4$-singularity. 
	This example has a $\mathbb{Q}$-smoothing. 
	
	Let $X:=X_{10}\subset \mathbb{P}(1,1,2,3,4)$ be a weighted hypersurface of degree $10$ defined by the polynomial 
	\[
	f_{X_{10}}:= w^2(x_1^2 + x_2^2) +w(y^3+z^2) +x_1^{10} +x_2^{10} +y^5 + z^3 x_1, 
	\]
	where $x_1, x_2, y, z, w$ are coodinates of weights $1,1,2,3,4$, respectively. 
	By perturbing the coefficients of the polynomial, we obtain that 
	\[
	\Sing X = \{[0:0:0:1:0], [0:0:0:0:1] \},  
	\]
	$p_z:= [0:0:0:1:0]$ is a $1/3(1,1,2)$-singularity and $p_w:= [0:0:0:0:1]$ is an $A_{1,2}/4$-singularity. 
Let 
\[
\mathcal{X}:= (f_{X_{10}}+ t \cdot yw^2=0) \subset \mathbb{P}(1,1,2,3,4) \times \mathbb{A}^1 \rightarrow \mathbb{A}^1
\]
be a deformation of $X$, where $t$ is a coordinate of $\mathbb{A}^1$. 
Then we see that $\mathcal{X}$ is a $\mathbb{Q}$-smoothing of $X$. 
The general fiber $\mathcal{X}_{t}$ has two $1/2(1,1,1)$-singularities, a $1/3(1,1,2)$-singularity and a $1/4(1,3,1)$-singularity.  
	\end{eg}

\begin{rem}\label{A12remark}
We give a comment on a $\mathbb{Q}$-Fano $3$-fold with $A_{1,2}/4$-singularities.

Let $X$ be a $\mathbb{Q}$-Fano $3$-fold. The local-to-global spectral sequence of $\Ext$ groups induces an exact sequence 
\[
\Ext^1(\Omega^1_X, \mathcal{O}_X) \rightarrow H^0(X, \underline{\Ext}^1(\Omega^1_X, \mathcal{O}_X) ) 
\rightarrow H^2(X, \Theta_X),  
\]
where $\underline{\Ext}^1$ is a sheaf of $\Ext$ groups. 
Recall that $\Ext^1(\Omega^1_X, \mathcal{O}_X)$ and $ H^0(X, \underline{\Ext}^1(\Omega^1_X, \mathcal{O}_X) )$ are 
the sets of first order deformations of $X$ and the singularities on $X$, respectively. 
Thus, if we have $H^2(X, \Theta_X) =0$, we see that $X$ is $\mathbb{Q}$-smoothable. 

However, this approach does not work in general. 
Namikawa constructed an example of a Fano $3$-fold $X$ with $A_{1,2}$-singularities such that $H^2(X, \Theta_X) \neq 0$  
 (\cite[Example 5]{NamFano}). Here an $A_{1,2}$-singularity 
 is a hypersurface singularity locally isomorphic to $(x^2+y^2+z^3+u^2=0) \subset \mathbb{C}^4$.
This $X$ has a smoothing. 
The author expects that there also exists a $\mathbb{Q}$-Fano $3$-fold $X$ with $A_{1,2}/4$-singularities such that 
$H^2(X, \Theta_X) \neq 0$. 

Thus we do not know $\mathbb{Q}$-smoothability of a $\mathbb{Q}$-Fano $3$-fold with $A_{1,2}/4$-singularities.
\end{rem}

\vspace{5mm}

As another application of Theorem \ref{coboundarytheorem}, we obtain a proof of Theorem \ref{qcy3thm} as follows. 
	
	\begin{proof}[Proof of Theorem \ref{qcy3thm}]
		The proof is a modification of the proof of \cite[Main Theorem 1]{mina}.
We sketch the proof for the convenience of the reader.

First we prepare notations to define the diagram (\ref{minadiagqcy3}).

Let $p_1,\ldots, p_l \in X$ be the non-quotient singularities and $U_1, \ldots, U_l$ their Stein neighborhoods. 
Let $\nu \colon \tilde{Y} \rightarrow Y$ be a $\mathbb{Z}_r$-equivariant resolution such that  
its exceptional divisor $F$ is a SNC divisor and $\tilde{Y} \setminus F \simeq Y \setminus \nu^{-1}(\{p_1, \ldots, p_l \})$. 
Let $\tilde{\pi} \colon \tilde{Y} \rightarrow \tilde{X}:= \tilde{Y}/ \mathbb{Z}_r$ be the quotient morphism 
and $\mu \colon \tilde{X} \rightarrow X$ the induced birational morphism with the exceptional divisor $E$. 

Let $V_i:= \pi^{-1}(U_i)$, $\tilde{V}_i:= \nu^{-1}(V_i)$, $F_i:= F \cap \tilde{V_i}$ and $\nu_i:= \nu|_{\tilde{V}_i} \colon \tilde{V}_i \rightarrow V_i$ be the restrictions. 
Let $\tilde{U}_i:= \mu^{-1}(U_i)$, $E_i := E \cap \tilde{U}_i$ and $\tilde{\pi}_i:= \tilde{\pi}|_{\tilde{V}_i} \colon \tilde{V}_i \rightarrow \tilde{U}_i$ 
the induced finite morphism. 
Let $\bar{\mathcal{F}}^{(0)}:= \left( \tilde{\pi}_* \Omega^2_{\tilde{Y}}(- \nu^* K_V) \right)^{\mathbb{Z}_r}$ be the $\mathbb{Z}_r$-invariant part 
and $\bar{\mathcal{F}}_i^{(0)}:= \bar{\mathcal{F}}^{(0)}|_{\tilde{U}_i}$ its restriction. 

Then we have the diagram 
\begin{equation}\label{minadiagqcy3}
	\xymatrix{
H^1(X', \Omega^2_{X'}(-K_{X'})) \ar[r]^{\oplus \psi_i} \ar[d]^{\oplus p_{U_i}} & 
\oplus_{i=1}^l H^2_{E_i}(\tilde{X}, \bar{\mathcal{F}}^{(0)}) \ar[r]^{\oplus B_i} \ar[d]_{\oplus \varphi_i}^{\simeq} & 
H^2(\tilde{X}, \bar{\mathcal{F}}^{(0)}) \\
\oplus_{i=1}^l H^1(U'_i, \Omega^2_{U'_i}(-K_{U'_i})) \ar[r]^{\bar{\phi}_i} & 
\oplus_{i=1}^l H^2_{E_i}(\tilde{U}_i, \bar{\mathcal{F}}_i^{(0)}), & 
}
\end{equation}
where $X':= X \setminus \{p_1, \ldots, p_l \}$ and $U_i' := U_i \cap X'$. 

Let $V'_i := \pi^{-1}(U'_i)$. 
Note that $B_i \circ \varphi_i^{-1} \circ \bar{\phi}_i$ is the $\mathbb{Z}_r$-invariant part of the composition 
\begin{multline}
H^1(V'_i, \Omega^2_{V'_i}(-K_{V'_i})) \rightarrow H^2_{F_i}(\tilde{V}_i, \Omega^2_{\tilde{V}_i}(-\nu_i^* K_{V_i})) 
\rightarrow H^2_{F_i}(\tilde{Y}, \Omega^2_{\tilde{Y}}(-\nu^* K_{Y})) \\
\rightarrow H^2(\tilde{Y}, \Omega^2_{\tilde{Y}}(-\nu^* K_{Y})). 
\end{multline}
We see that this is zero by \cite[Proposition 1.2]{NamSt} since we assumed that $Y$ is $\mathbb{Q}$-factorial. 
Thus we also see that $B_i \circ \varphi_i^{-1} \circ \bar{\phi}_i =0$. 

There exists an element $\eta_i \in H^1(U'_i, \Omega^2_{U'_i}(-K_{U'_i}))$ such that $\bar{\phi}_i(\eta_i) \neq 0$ by Theorem \ref{coboundarytheoremintro}. 
Since $B_i \circ \varphi_i^{-1} \circ \bar{\phi}_i(\eta_i) =0$, there exists 
$\eta \in H^1(X', \Omega^2_{X'}(-K_{X'}))$ such that $\psi_i(\eta)= \varphi_i^{-1}(\phi_i(\eta_i))$. 
By the relation (\ref{blowdownrel}) and $p_{U_i}(\eta) - \eta_i \in \Ker \bar{\phi}_i$, 
we see that $p_{U_i}(\eta) \not\in \Image (\nu_i)_*$, where we use the inclusion 
$H^1(U'_i, \Omega^2_{U'_i}(-K_{U'_i})) \subset H^1(V'_i, \Omega^2_{V'_i}(-K_{U'_i}))$. 
By arguing as in the proof of \cite[Theorem 3.5]{Sano}, we can deform singularity $p_i \in U_i$ as long as 
$\bar{\phi}_i \neq 0$. 
By Corollary \ref{barphicorollary}, we obtain a required deformation since the deformations of a $\mathbb{Q}$-Calabi--Yau 
$3$-fold are unobstructed (\cite[Theorem A]{Namtop}).

		\end{proof}

\section*{Acknowledgments} 
This paper is a part of the author's Ph.D thesis submitted to University of Warwick. 
The author would like to express deep gratitude to Prof. Miles Reid for his warm encouragement and 
valuable comments. 
He would like to thank Professor Yoshinori Namikawa for useful conversations. 
Part of this paper is written during the author's stay in Princeton university and the university of Tokyo. 
He would like to thank Professors J\'{a}nos Koll\'{a}r and Yujiro Kawamata for useful comments and nice hospitality. 
He thanks the referee for useful suggestions. 
He is partially supported by Warwick Postgraduate Research Scholarship.

\end{document}